\newtheorem{theorem}{Theorem}
\newtheorem{lemma}[theorem]{Lemma}
\newtheorem{corollary}[theorem]{Corollary}
\begin{document}

\baselineskip=15.5pt

\title[A symplectic analog of the Quot scheme]{A symplectic analog of the Quot scheme}

\author[I. Biswas]{Indranil Biswas}

\address{School of Mathematics, Tata Institute of Fundamental Research,
Homi Bhabha Road, Bombay 400005, India}

\email{indranil@math.tifr.res.in}

\author[A. Dhillon]{Ajneet Dhillon}

\address{Department of Mathematics, Middlesex College, University of
Western Ontario, London, ON N6A 5B7, Canada}

\email{adhill3@uwo.ca}

\author[J. Hurtubise]{Jacques Hurtubise}

\address{Department of Mathematics, McGill University, Burnside
Hall, 805 Sherbrooke St. W., Montreal, Que. H3A 0B9, Canada}

\email{jacques.hurtubise@mcgill.ca}

\author[R. A. Wentworth]{Richard A. Wentworth}

\address{Department of Mathematics, University of Maryland, College Park,
MD 20742, USA}

\email{raw@umd.edu}

\subjclass[2010]{14H60, 14C20}

\keywords{Symplectic Quot scheme, automorphism, symmetric product.}

\date{}

\begin{abstract}
We construct a symplectic analog of the Quot scheme that parametrizes the torsion
quotients of a trivial vector bundle over a compact Riemann surface. Some of its
properties are investigated.\\
\textit{R\'esum\'e.}  \textbf{Une analogue symplectique du sch\'ema Quot.}~\,\,
Nous construisons un analogue symplectique du sch\'ema Quot, qui param\'etrise
les modules quotients à torsion
d'un fibr\'e vectoriel trivial sur une surface de Riemann compacte, et
examinons certaines de ses propri\'et\'es.
\end{abstract}

\maketitle

\section{Introduction}

Let $X$ be a compact connected Riemann surface. For fixed integers $n$ and $d$, let 
${\mathcal Q}({\mathcal O}^{\oplus n}_X, d)$ denote the Quot scheme that parametrizes 
the torsion quotients of ${\mathcal O}^{\oplus n}_X$ of degree $d$ (see \cite{Gr} for 
construction and properties of a general Quot schemes). This particular Quot scheme 
${\mathcal Q}({\mathcal O}^{\oplus n}_X, d)$ arises in the study of moduli space of 
vector bundles of rank $n$ on $X$ \cite{BGL}, \cite{GL}, \cite{bifet}. Being a moduli 
space of vortices it is also studied in mathematical physics (see \cite{Ba} and 
references therein).

Here we consider a symplectic analog of the Quot scheme. Let ${\mathcal O}^{\oplus 
2r}_X$ be the trivial vector bundle on $X$ equipped with a symplectic structure given 
by the standard symplectic form on ${\mathbb C}^{2r}$. Take torsion quotients of it of 
degree $dr$ that are compatible with the symplectic structure (this is explained in 
Section \ref{se2.1}). Fixing $X$, $r$ and $d$, let ${\mathcal Q}$ denote the 
associated symplectic Quot scheme. The projective symplectic group 
$\text{PSp}(2r,{\mathbb C})$ has a natural action on ${\mathcal Q}$. We show that the 
connected component, containing the identity element, of the group of all 
automorphisms of ${\mathcal Q}$ coincides with $\text{PSp}(2r,{\mathbb C})$.

In \cite{BGL}, Bifet, Ghione and Letizia used the usual Quot schemes ${\mathcal 
Q}({\mathcal O}^{\oplus n}_X, d)$ associated to $X$ to compute the cohomologies of 
the moduli spaces of semistable vector bundles of rank $n$ on $X$. Our hope is to be 
able to compute the cohomologies of the moduli space of semistable 
$\text{Sp}(2r,{\mathbb C})$--bundles on $X$ using the symplectic Quot scheme 
${\mathcal Q}$.

\section{The symplectic Quot scheme}

\subsection{Construction of the symplectic Quot scheme}\label{se2.1}

Let
$$
\omega'\, :=\, \sum_{i=1}^r (e^*_i\otimes e^*_{i+r}- e^*_{i+r}\otimes e^*_i)
$$
be the standard symplectic form on ${\mathbb C}^{2r}$. Let $X$ be a compact
connected Riemann surface. The sheaf of holomorphic
functions on $X$ will be denoted by ${\mathcal O}_X$. Let
$$
E_0\,:=\, {\mathcal O}^{\oplus 2r}_X
$$
be the trivial holomorphic vector bundle on $X$ of rank $2r$. The above symplectic
form $\omega'$ defines a symplectic structure on $E_0$, because the fibers of
$E_0$ are identified with ${\mathbb C}^{2r}$. This symplectic structure
on $E_0$ will be denoted by $\omega_0$.

Fix an integer $d\, \geq\, 1$. Let
\begin{equation}
{\mathcal Q}\, :=\, {\mathcal Q}(\omega_0,d)
\end{equation}
be the \textit{symplectic Quot scheme} parametrizing all torsion quotients
$$
\tau_Q\, :\, E_0\, \longrightarrow\, Q
$$
of degree $dr$ satisfying the following condition: there is an effective divisor
$D_Q$ on $X$ of degree $d$ such that the restricted form
$\omega_0\vert_{\text{kernel}(\tau_Q)}$ factors as
\begin{equation}\label{e0}
\text{kernel}(\tau_Q)\otimes \text{kernel}(\tau_Q)\, \stackrel{\omega_0}{\longrightarrow}
\,{\mathcal O}_X(-D_Q)\, \hookrightarrow\, {\mathcal O}_X\, ;
\end{equation}
it should be clarified that $D_Q$ depends on $Q$.
Equivalently, the symplectic Quot scheme $\mathcal Q$ parametrizes all coherent analytic
subsheaves $F\, \subset\, E_0$ of rank $2r$ and degree $-dr$ such that the form
$\omega_0\vert_F$ factors as
$$
F\otimes F\, \stackrel{\omega_0}{\longrightarrow}
\,{\mathcal O}_X(-D)\, \hookrightarrow\, {\mathcal O}_X\, ,
$$
where $D$ is some effective divisor on $X$ of degree $d$ that depends on $F$. This
description of $\mathcal Q$ sends any subsheaf $F$ to the quotient sheaf $E_0/F$.

The above pairing
$$
F\otimes F\, \stackrel{\omega_0}{\longrightarrow}\,{\mathcal O}_X(-D)
$$
produces an injective homomorphism
\begin{equation}\label{mu}
\mu\, :\, F\, \longrightarrow\, {\mathcal O}_X(-D)\otimes F^*
\end{equation}
between coherent analytic sheaves of rank $2r$; the homomorphism
$\mu$ is injective because it is injective over the complement of the support
of $E_0/F$. Since
$$
\text{degree}(F)\,=\, -dr\,=\, \text{degree}({\mathcal O}_X(-D)\otimes F^*)\, ,
$$
the homomorphism $\mu$ in \eqref{mu} is an isomorphism. This means that the pairing
$F_x\otimes F_x\, \stackrel{\omega_0(x)}{\longrightarrow}\, {\mathcal O}_X(-D)_x$ is
nondegenerate for every $x\, \in\, X$. The divisor $D$ is
uniquely determined by $F$ because $\mu$ is an isomorphism. More precisely, consider
the homomorphism of coherent analytic sheaves
$$
\widetilde{\mu}\, :\, F\, \longrightarrow\, F^*
$$
given by the restriction $\omega_0\vert_F$. The divisor $D$ is the scheme theoretic
support of the quotient sheaf $F^*/\widetilde{\mu}(F)$.

The group of all permutations of $\{1\, , \cdots\, ,d\}$ will be denoted by $S_d$.
The quotient
\begin{equation}\label{se}
X^d/S_d
\end{equation}
of $X^d$ for the natural action of
$S_d$ is the symmetric product $\text{Sym}^d(X)$. Let
\begin{equation}\label{e2}
\varphi\, :\, {\mathcal Q}\,\longrightarrow\, \text{Sym}^d(X)
\end{equation}
be the morphism that sends any quotient $Q$ to the divisor $D_Q$ (see \eqref{e0}).

Let
\begin{equation}\label{wtq}
\widetilde{\mathcal Q}\, :=\, {\mathcal Q}({\mathcal O}^{\oplus 2r}_X, rd)
\end{equation}
be the Quot scheme that parametrizes all torsion quotients of ${\mathcal O}^{\oplus 2r}_X
\,=\, E_0$ of degree $rd$. It is an irreducible smooth complex projective variety of
dimension $2r^2d$. For any subsheaf $F$ of $E_0$ with
$E_0/F\, \in\,\widetilde{\mathcal Q}$, we have
\begin{equation}\label{dt}
T_{E_0/F}\widetilde{\mathcal Q}\,=\, H^0(X,\, (E_0/F)\otimes F^*)\, .
\end{equation}
Now assume that $E_0/F\,\in\, \mathcal Q$. First consider the homomorphism 
$F\otimes E_0\, \stackrel{\omega_0}{\longrightarrow}\, {\mathcal O}_X$
obtained by restricting $\omega_0$. Note that
$\omega_0 (F\otimes F)\, \subset\, {\mathcal O}_X(-\varphi(E_0/F))$,
where $\varphi$ is the morphism in \eqref{e2} (see \eqref{e0}). Therefore,
$\omega_0$ produces a homomorphism
$$
\theta\, :\, F\otimes (E_0/F)\, \longrightarrow\, {\mathcal O}_X/({\mathcal O}_X(-
\varphi(E_0/F)))\, .
$$
The subspace
$$
T_{E_0/F}{\mathcal Q}\, \subset\, T_{E_0/F}\widetilde{\mathcal Q}
$$
consists of all homomorphisms $\alpha\, :\, F\, \longrightarrow\, E_0/F$
(see \eqref{dt}) such that
$$
\theta(v\otimes \alpha(w))\,=\, \theta(w\otimes\alpha(v))\, .
$$

Clearly $\mathcal Q$ is a closed subscheme of $\widetilde{\mathcal Q}$.

\begin{lemma}\label{lem1}
The scheme $\mathcal Q$ is an irreducible projective variety of dimension
$d(r^2+r+2)/2$.
\end{lemma}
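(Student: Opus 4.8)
The plan is to study the morphism $\varphi\colon {\mathcal Q}\to \text{Sym}^d(X)$ of \eqref{e2}. Since ${\mathcal Q}$ is a closed subscheme of the irreducible smooth projective variety $\widetilde{\mathcal Q}$ (see \eqref{wtq} and the observation preceding this lemma), ${\mathcal Q}$ is projective and $\varphi$ is proper. It is also surjective: for an effective divisor $D=\sum_j m_j x_j$ of degree $d$, a subsheaf $F$ with $\varphi(E_0/F)=D$ is obtained by scaling, near each $x_j$, half of a local symplectic frame of $E_0$ by $z_j^{m_j}$, where $z_j$ is a local coordinate at $x_j$. So it remains to prove that ${\mathcal Q}$ is irreducible of dimension $d(r^2+r+2)/2$.

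First I would carry out a local analysis of $\varphi$. Fix $x\in X$ with local coordinate $z$, let $E_0/F\in{\mathcal Q}$, and let $m$ be the multiplicity of $x$ in $\varphi(E_0/F)$. Using the elementary divisor form of $F_x\subset (E_0)_x$ together with the fact that $\mu$ in \eqref{mu} is an isomorphism --- so that near $x$ the form $\omega_0\vert_F$ takes values exactly in $z^m{\mathcal O}_X$ and the Gram matrix of $\omega_0\vert_F$ in a local basis of $F$ has determinant $z^{2rm}$ times a unit --- one finds that the length of $E_0/F$ at $x$ equals $rm$. When $m=1$, the image of $F_x$ in the fibre $(E_0)_x/z\,(E_0)_x\cong{\mathbb C}^{2r}$ is isotropic for the induced symplectic form (because $\omega_0\vert_F$ takes values in $z{\mathcal O}_X$ near $x$), hence of dimension at most $r$, while the length of $E_0/F$ at $x$ being $r$ forces it to have dimension at least $r$; therefore $z\,(E_0)_x\subset F_x$ and $F_x/z\,(E_0)_x$ is a Lagrangian subspace of ${\mathbb C}^{2r}$, and conversely every Lagrangian subspace arises from a unique such $F_x$. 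Since $E_0$ is the trivial bundle, it follows that over the dense open subset $U\subset\text{Sym}^d(X)$ of reduced divisors one has
$$
\varphi^{-1}(U)\;\cong\;\bigl((X^d\setminus(\text{diagonals}))\times L^d\bigr)/S_d\, ,
$$
where $L$ denotes the Lagrangian Grassmannian of ${\mathbb C}^{2r}$ and $S_d$ permutes the $d$ factors of $X^d$ and of $L^d$ simultaneously; as $S_d$ acts freely on $X^d\setminus(\text{diagonals})$, this is a smooth irreducible variety of dimension $d+d\cdot r(r+1)/2=d(r^2+r+2)/2$. Since $\varphi^{-1}(U)$ is open in ${\mathcal Q}$, its closure $Z$ is an irreducible closed subset of ${\mathcal Q}$ of this dimension.

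To finish I would prove that $\varphi$ is flat. As $\text{Sym}^d(X)$ is smooth, flatness of $\varphi$ is equivalent to ${\mathcal Q}$ being Cohen--Macaulay together with $\varphi$ being equidimensional; for the latter one analyzes the punctual symplectic Quot scheme at a point of multiplicity $m$ and shows it has dimension $m\cdot r(r+1)/2$, so that $\varphi^{-1}(D)$, a product of such schemes over the points of $D$, has dimension $d\cdot r(r+1)/2$ for every $D$. Granting equidimensionality of $\varphi$ yields $\dim{\mathcal Q}=d(r^2+r+2)/2$, whence $Z$ is a (top-dimensional) irreducible component; granting flatness, $\varphi$ is open, so the image of every irreducible component of ${\mathcal Q}$ is dense in $\text{Sym}^d(X)$, hence meets $U$, and then --- the fibres of $\varphi$ over $U$ being irreducible --- that component contains $\varphi^{-1}(U)$ and therefore contains $Z$. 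A component containing the component $Z$ must equal $Z$, so ${\mathcal Q}=Z$ is irreducible, of the asserted dimension.

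The main obstacle is the flatness of $\varphi$, that is, the control of the non-reduced fibres; equivalently, one must show that the condition \eqref{e0} cuts out ${\mathcal Q}$ in the smooth variety $\widetilde{\mathcal Q}$ with Cohen--Macaulay singularities of the expected codimension, or that the punctual symplectic Quot schemes have dimension $m\cdot r(r+1)/2$. A convenient reformulation uses the incidence variety $\{(E_0/F,D)\}\subset\widetilde{\mathcal Q}\times\text{Sym}^d(X)$, which maps bijectively onto ${\mathcal Q}$ because $D$ is determined by $F$, and on which \eqref{e0} becomes the vanishing of the homomorphism $\wedge^2{\mathcal F}\to{\mathcal O}_X/{\mathcal O}_X(-{\mathcal D})$ induced by $\omega_0$, where ${\mathcal F}$ and ${\mathcal D}$ are the universal subsheaf and the universal divisor. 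This is a degeneracy locus for the skew-symmetric homomorphism $\widetilde\mu\colon{\mathcal F}\to{\mathcal F}^*$ restricting $\omega_0$; the section defining it is highly degenerate --- a naive bundle-section count gives a codimension far larger than the true one --- so the expected-dimension and Cohen--Macaulay statements must be proved using the Pfaffian-type structure of the condition rather than a generic count. Once this is done, the generic-fibre description above gives both irreducibility and the dimension of ${\mathcal Q}$.
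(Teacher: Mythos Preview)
Your description of the generic fibre of $\varphi$ agrees with the paper's: over a reduced divisor $D=x_1+\cdots+x_d$ with the $x_i$ distinct, the fibre is identified with ${\mathbb L}^d$ (the $d$-th power of the Lagrangian Grassmannian), and this gives $\dim{\mathcal Q} = d + d\cdot r(r+1)/2 = d(r^2+r+2)/2$.

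For irreducibility, however, the paper takes a much shorter route than you do. It invokes no flatness, no Cohen--Macaulayness, and no degeneracy-locus machinery: it simply asserts that \emph{every} fibre of $\varphi$ is irreducible, and then appeals to the standard fact that a surjection onto an irreducible base with irreducible fibres (of constant dimension) has irreducible total space. So the entire content of the paper's proof, beyond the generic-fibre computation that you share, is the bare claim ``each fibre of $\varphi$ is irreducible.''

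Your approach via flatness is a legitimate alternative --- it trades irreducibility of the punctual symplectic Quot schemes for a dimension bound together with a Cohen--Macaulay statement about ${\mathcal Q}$. But by your own account neither of those is established, so your argument is at present no more complete than the paper's one-line assertion, and considerably heavier. Note too that both routes rest on the same local input: the punctual scheme at a point of multiplicity $m$ must have dimension $m\,r(r+1)/2$. If, in carrying out that analysis, you also show the punctual scheme is \emph{irreducible}, you recover the paper's short argument immediately, and the entire Pfaffian/flatness apparatus becomes superfluous.
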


\begin{proof}
The morphism $\varphi$ in \eqref{e2} is clearly surjective. Also, each
fiber of $\varphi$ is irreducible. Since $\text{Sym}^d(X)$ is irreducible,
we conclude that $\mathcal Q$ is also irreducible.

Take a point $\widehat{x}\, :=\, \{x_1\, \cdots\, ,x_d\}\, \in\, \text{Sym}^d(X)$ such
that all $x_i\, \in\, X$, $1\, \leq\, i\, \leq\, d$, are distinct. Let
\begin{equation}\label{lag}
{\mathbb L}\, \subset\, \text{Gr}({\mathbb C}^{2r}, r)
\end{equation}
be the variety parametrizing all Lagrangian subspaces of ${\mathbb C}^{2r}$
for the standard symplectic form $\omega'$. We have a map
${\mathbb L}^d\,\longrightarrow\, \varphi^{-1}(\widehat{x})$ that sends
any $(V_1\, , \cdots\, , V_d)\, \in\, {\mathbb L}^d$ to the composition
$$
{\mathcal O}_X^{2r} \, \longrightarrow\, {\mathcal O}_X^{2r}\vert_{x_1\cup\cdots
\cup x_d}\,=\, \bigoplus_{i=1}^d {\mathbb C}^{2r}_{x_i} \, \longrightarrow\,
\bigoplus_{i=1}^d {\mathbb C}^{2r}_{x_i}/V_i\, ,
$$
where ${\mathbb C}^{2r}_{x_i}$ is the sheaf supported at $x_i$ with stalk
${\mathbb C}^{2r}$; note that this composition is surjective and hence it
defines an element of $\varphi^{-1}(\widehat{x})$. This map ${\mathbb L}^d\,
\longrightarrow\, \varphi^{-1}(\widehat{x})$ is clearly an isomorphism.
Since $\dim {\mathbb L}\,=\, r(r+1)/2$, it follows that $\dim {\mathcal Q}\,=\,
d(r^2+r+2)/2$.
\end{proof}

\subsection{Vortex equation and stability}

Fix a Hermitian metric $\omega_X$ on $X$; note that $\omega_X$ is K\"ahler.

Take a subsheaf $F\, \subset\, {\mathcal O}^{\oplus 2r}_X$ such that
the quotient ${\mathcal O}^{\oplus 2r}_X/F$ is a torsion sheaf of degree $dr$,
so $F\, \in\, \widetilde{\mathcal Q}$ (see \eqref{wtq}). Let
\begin{equation}\label{f1}
{\mathcal O}^{\oplus 2r}_X\, \hookrightarrow\, F^*
\end{equation}
be the dual of the inclusion of $F$ in ${\mathcal O}^{\oplus 2r}_X$. Note that the
homomorphism in \eqref{f1} defines a $2r$--pair of rank $2r$
\cite[p. 535, (3.1)]{BDW}. Therefore, each element of $\mathcal Q$ defines a $2r$--pair
of rank $2r$.

Take any real number $\tau$, and take an element $z_F\, \in\, \mathcal Q$.
Let $F\, \subset\, {\mathcal O}^{\oplus 2r}_X$ be the subsheaf represented by $z_F$.
We note that the subsheaf $F$ is $\tau$--stable if $rd\, <\, \tau$ (see
\cite[p. 535, Definition 3.3]{BDW} for the definition of $\tau$--stability).

We assume that $\tau\, >\, rd$. Therefore,
all elements of ${\mathcal Q}$ are $\tau$--stable. Hence every $F\, \subset\,
{\mathcal O}^{\oplus 2r}_X$ lying in $\mathcal Q$ admits a unique Hermitian structure
that satisfies the $2r$--$\tau$--vortex equation \cite[p. 536, Theorem 3.5]{BDW}.
 
Take any $F\, \subset\, {\mathcal O}^{\oplus 2r}_X$ represented by an element
of $\mathcal Q$. Let $h_F$ denote the unique Hermitian structure on $F$ that
satisfies the $2r$--$\tau$--vortex equation. The isomorphism $\mu$ in \eqref{mu}
takes $h_F$ to the unique Hermitian structure on ${\mathcal O}_X(-D)\otimes F^*$
that satisfies $2r$--$\tau$--vortex equation for ${\mathcal O}_X(-D)\otimes F^*
\, \in\, {\mathcal Q}$. Indeed, this follows immediately from the uniqueness
of the Hermitian structure satisfying the $2r$--$\tau$--vortex equation.

\section{Automorphisms of the symplectic Quot scheme}

In this section we assume that $\text{genus}(X)\, \geq\, 2$.

The group of all holomorphic automorphisms of $\mathcal Q$ will be denoted by
$\text{Aut}({\mathcal Q})$. Let
$$
\text{Aut}({\mathcal Q})^0\, \subset\, \text{Aut}({\mathcal Q})
$$
be the connected component of it containing the identity element. The group
of linear automorphisms of the symplectic vector space $
({\mathbb C}^{2r}\, , \omega')$ is $\text{Sp}(2r,{\mathbb C})$. The standard
action of $\text{Sp}(2r,{\mathbb C})$ on ${\mathbb C}^{2r}$ produces an action
of $\text{Sp}(2r,{\mathbb C})$ on $\mathcal Q$. The center ${\mathbb Z}/2\mathbb Z$
of $\text{Sp}(2r,{\mathbb C})$ acts trivially on $\mathcal Q$. Therefore,
we get a homomorphism
$$
\rho\, :\, \text{PSp}(2r,{\mathbb C})\,=\,\text{Sp}(2r,{\mathbb C})/
({\mathbb Z}/2{\mathbb Z})\, \longrightarrow\, \text{Aut}({\mathcal Q})^0\, .
$$

\begin{theorem}\label{thm1}
The above homomorphism $\rho$ is an isomorphism.
\end{theorem}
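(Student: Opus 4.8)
My plan is to prove that $\rho$ is an injective homomorphism of algebraic groups whose image has full dimension in the connected group $\mathrm{Aut}(\mathcal Q)^0$; as we are in characteristic zero this forces $\rho$ to be an isomorphism. The two things to establish are injectivity, which is elementary, and the assertion that $\dim\mathrm{Aut}(\mathcal Q)^0\le\dim\mathrm{PSp}(2r,\mathbb C)=r(2r+1)$, which I get by showing that every $T\in\mathrm{Aut}(\mathcal Q)^0$ already lies in the image of $\rho$.

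For injectivity: if $g\in\mathrm{PSp}(2r,\mathbb C)$ acts trivially on $\mathcal Q$, restrict its action to the fibre $\varphi^{-1}(\widehat x)\cong{\mathbb L}^d$ over a reduced point $\widehat x$ (see the proof of Lemma \ref{lem1} and \eqref{lag}). There $g$ acts diagonally through the action of $\mathrm{PSp}(2r,\mathbb C)$ on the Lagrangian Grassmannian ${\mathbb L}$, and that action is faithful because the intersection of the stabilisers of all Lagrangian subspaces is the centre $\mathbb Z/2\mathbb Z$ of $\mathrm{Sp}(2r,\mathbb C)$. Hence $g$ is the identity.

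The heart of the matter, and the step I expect to be the main obstacle, is to prove that every $T\in\mathrm{Aut}(\mathcal Q)^0$ is compatible with $\varphi$, that is, $\varphi\circ T=\varphi$. Here is the approach. The general fibre of $\varphi$ is ${\mathbb L}^d$, which is rationally connected; consequently $H^1(\mathcal Q,\mathcal O_{\mathcal Q})\cong H^1(\mathrm{Sym}^d(X),\mathcal O)$ and the Albanese morphism of $\mathcal Q$ is the composition of $\varphi$ with the Abel--Jacobi map $\mathrm{Sym}^d(X)\to J$, where $J=\mathrm{Jac}(X)$. Functoriality of the Albanese then makes $T$ descend to an automorphism of $J$, which is a translation since $T$ lies in the identity component, so one obtains a homomorphism of algebraic groups $\mathrm{Aut}(\mathcal Q)^0\to J$. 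The key point is that this homomorphism is trivial: $\mathcal Q$ admits no algebraic action of a positive--dimensional abelian variety lifting a nonzero translation of $J$, since such an action would produce abelian subvarieties of $\mathcal Q$ mapping finitely onto positive--dimensional subvarieties of $J$, whereas moving a point of $\mathcal Q$ transversally to a $\varphi$--fibre would amount to tensoring the corresponding subsheaf of $E_0$ by a nontrivial degree--zero line bundle on $X$, which does not preserve the ambient bundle $E_0$. Granting this, $T$ preserves every fibre of the Albanese map; combining with the classical fact that $\mathrm{Aut}(\mathrm{Sym}^d(X))^0$ is trivial when $\mathrm{genus}(X)\ge2$, together with an analysis of the further fibration of an Albanese fibre over a linear system (needed only when $d\ge\mathrm{genus}(X)$) via its extremal contractions, one upgrades this to $\varphi\circ T=\varphi$.

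Once $\varphi$ is known to be preserved, restrict over the open set $U\subset\mathrm{Sym}^d(X)$ of reduced divisors, over which, as in the proof of Lemma \ref{lem1}, $\mathcal Q$ is the locally trivial fibration with fibre ${\mathbb L}^d$ associated to the $S_d$--cover of $U$ by distinct ordered $d$--tuples of points. An automorphism $T$ preserving the fibres yields an $S_d$--equivariant morphism from the ordered configuration space to $\mathrm{Aut}({\mathbb L}^d)^0$, with $S_d$ permuting the factors. Since ${\mathbb L}$ is the Lagrangian Grassmannian $\mathrm{Sp}(2r,\mathbb C)/P$, which is not one of the exceptional rational homogeneous spaces, $\mathrm{Aut}({\mathbb L})^0=\mathrm{PSp}(2r,\mathbb C)$, and hence $\mathrm{Aut}({\mathbb L}^d)^0=\mathrm{PSp}(2r,\mathbb C)^d$. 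So $T$ over $U$ is classified by an $S_d$--equivariant morphism from the ordered configuration space to $\mathrm{PSp}(2r,\mathbb C)^d$; the requirement that $T$ be regular on all of $\mathcal Q$, in particular over the non--reduced divisors where the fibre of $\varphi$ degenerates, forces this morphism to extend across the diagonals to all of $X^d$, whence it is constant (source projective, target affine), and $S_d$--equivariance then forces it to be the diagonal embedding of a single element of $\mathrm{PSp}(2r,\mathbb C)$. Thus $T=\rho(g)$ for some $g$, which gives surjectivity of $\rho$ and completes the proof. The delicate points are the two rigidity statements used above --- that $\mathrm{Aut}(\mathcal Q)^0$ cannot move the fibres of $\varphi$, and that a fibrewise family of automorphisms of ${\mathbb L}^d$ extending over the whole symmetric product must be diagonal and constant --- and both hinge on understanding $\varphi$ over the locus of non--reduced divisors.
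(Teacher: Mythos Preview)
Your strategy is genuinely different from the paper's, and the two gaps you flag at the end are real and unresolved. The paper sidesteps both by working infinitesimally rather than at the group level: it identifies the Lie algebra of $\mathrm{Aut}(\mathcal Q)^0$ inside $H^0(\mathcal V,T\mathcal V)$ for $\mathcal V=\varphi^{-1}(U)$, and then computes this space directly. Any algebraic vector field $\gamma$ on $\mathcal V$ projects (since the fibres of $\varphi'$ are connected and projective) to a vector field on $U$, which pulls back to a \emph{meromorphic} vector field on $X^d$ with poles only along the diagonals; a result from \cite[Proposition~2.3]{BDH} says no such nonzero vector field exists when $\mathrm{genus}(X)\ge 2$, so $\gamma$ is vertical. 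The vertical vector fields are then computed as $(\mathrm{sp}(2r,\mathbb C)^{\oplus d})^{S_d}=\mathrm{sp}(2r,\mathbb C)$, using that $H^0(\widetilde U,\mathcal O_{\widetilde U})=\mathbb C$ \cite[Lemma~2.2]{BDH} to rule out nonconstant families. This gives $\dim\mathrm{Aut}(\mathcal Q)^0\le r(2r+1)$ without ever touching the degenerate fibres of $\varphi$ or the Albanese.

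By contrast, your route requires two hard steps you do not actually carry out. First, the argument that the induced homomorphism $\mathrm{Aut}(\mathcal Q)^0\to J$ is trivial is not correct as written: a nontrivial image in $J$ does not obviously yield an abelian variety \emph{acting} on $\mathcal Q$ (you would need Chevalley's theorem and a splitting), and the sentence about ``tensoring the corresponding subsheaf of $E_0$ by a nontrivial degree--zero line bundle'' misidentifies what moving in the Albanese direction does --- it changes the divisor $D_Q$, not the sheaf by a twist. Second, even granting $\varphi\circ T=\varphi$, the passage from an $S_d$--equivariant morphism $\widetilde U\to\mathrm{PSp}(2r,\mathbb C)^d$ to a morphism $X^d\to\mathrm{PSp}(2r,\mathbb C)^d$ is exactly the content you need and cannot be asserted from regularity of $T$ alone without analysing $\varphi$ over the diagonals. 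The paper's infinitesimal argument replaces this with the purely function--theoretic input $H^0(\widetilde U,\mathcal O_{\widetilde U})=\mathbb C$, which is much easier.
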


\begin{proof}
The standard action of $\text{Sp}(2r,{\mathbb C})$ on ${\mathbb C}^{2r}$ produces
an action of $\text{PSp}(2r,{\mathbb C})$ on the variety $\mathbb L$ in \eqref{lag}.
This action on $\mathbb L$ is effective. From this it follows
immediately that the homomorphism $\rho$ is injective (recall that the general fiber
of $\varphi$ is ${\mathbb L}^d$).

For each $1\, \leq\, i\, \leq\, d$, let $p_i\, :\,X^d\, \longrightarrow\, X$ be the
projection to the $i$-th factor of the Cartesian product. For each pair
$1\,\leq\, i\, <\, j\, \leq\, d$, let
$$
\Delta_{i,j}\,\subset\, X^d
$$
be the divisor over which the two maps $p_i$ and $p_j$ coincide. Let
$$
\widetilde{U} \,:=\, X^d\setminus (\bigcup_{1\leq i < j\leq d}
\Delta_{i,j})
$$
be the complement. Consider all point $\{x_1\, \cdots\, ,x_d\}\, \in\,
\text{Sym}^d(X)$ such that $x_k\, \not=\, x_\ell$ for all $k\, \not=\,\ell$.
The complement in $\text{Sym}^d(X)$ of the subset defined by such points
will be denoted by $U$. The quotient map
$X^d\, \longrightarrow\, X^d/S_d$ (see \eqref{se}) sends $\widetilde{U}$ to
$U$. The quotient map
\begin{equation}\label{f}
f\, :\, \widetilde{U} \,:=\, X^d\setminus (\bigcup_{1\leq i < j\leq d}
\Delta_{i,j})\, \longrightarrow\, U
\end{equation}
is an \'etale Galois covering with Galois group $S_d$.

The inverse image
$\varphi^{-1}(U)\, \subset\, {\mathcal Q}$ will be denoted by ${\mathcal V}$,
where $\varphi$ is the projection in \eqref{e2}. Let
\begin{equation}\label{vpp}
\varphi'\,:=\, \varphi\vert_{\mathcal V}\, :\, {\mathcal V}\,\longrightarrow\, U
\end{equation}
be the restriction of $\varphi$. We note that ${\mathcal V}$ is a fiber bundle over
$U$ with fibers isomorphic to ${\mathbb L}^d$ (see \eqref{lag} for $\mathbb L$).
In particular, $\mathcal V$ is contained in the smooth locus of ${\mathcal Q}$.

The Lie algebra of $\text{Sp}(2r,{\mathbb C})$ will be denoted by
$\text{sp}(2r,{\mathbb C})$. The Lie algebra of $\text{Aut}({\mathcal Q})^0$ is
contained in the space of algebraic vector fields
$H^0({\mathcal V},\, T{\mathcal V})$ equipped with the Lie bracket operation
of vector fields. Let
\begin{equation}\label{dr}
d\rho\, :\, \text{sp}(2r,{\mathbb C})\, \longrightarrow\,H^0({\mathcal V},\, T{\mathcal V})
\end{equation}
be the homomorphism of Lie algebras associated to the homomorphism $\rho$ of Lie groups. To
prove that $\rho$ is surjective it suffices to show that $d\rho$ is surjective.
We note that $d\rho$ is injective because $\rho$ is injective.

Take any algebraic vector field
$$
\gamma\, \in\, H^0({\mathcal V},\, T{\mathcal V})\, .
$$
Let
$$
d\varphi'\, :\, T{\mathcal V}\,\longrightarrow\, {\varphi'}^*TU
$$
be the differential of the projection $\varphi'$ in \eqref{vpp}. As noted before,
the fibers of $\varphi'$ are isomorphic to ${\mathbb L}^d$, in particular, they
are connected smooth projective varieties, so any holomorphic
function on a fiber of $\varphi'$ is a constant function. This implies
that the section $d\varphi'(\gamma)$ descends to $U$. In other words, 
there is a holomorphic vector field $\gamma'$ on $U$ such that
\begin{equation}\label{a}
d\varphi'(\gamma)\,=\, {\varphi'}^*\gamma'\, .
\end{equation}

Let
\begin{equation}\label{b}
\gamma''\, :=\, f^*\gamma'\, \in\, H^0(\widetilde{U},\, T\widetilde{U})
\end{equation}
be the pullback, where $f$ is the projection in \eqref{f}. Since the vector
field $\gamma$ is algebraic, the above vector field
$\gamma''$ is meromorphic on $X^d$, meaning
$$
\gamma''\, \in\, H^0(X^d,\, (TX^d)\otimes
{\mathcal O}_{X^d}(\sum_{1\leq i < j\leq d} m\cdot \Delta_{i,j}))
$$
for some integer $m$. It is known that there are no
such nonzero sections \cite[Proposition 2.3]{BDH}. Therefore, we have
$\gamma''\, =\, 0$, and hence from \eqref{a} and \eqref{b} it follows that
\begin{equation}\label{dz}
d\varphi'(\gamma)\,=\, 0\, .
\end{equation}

The standard action of $\text{Sp}(2r,{\mathbb C})$ on ${\mathbb C}^{2r}$ produces
an action of $\text{Sp}(2r,{\mathbb C})$ on $\mathbb L$ defined in \eqref{lag}. Let
$$
\text{sp}(2r,{\mathbb C})\,\longrightarrow\, H^0({\mathbb L},\, T{\mathbb L})
$$
be the corresponding homomorphism of Lie algebras. It is known that the above
homomorphism is an isomorphism. Let
$$
T_{\rm rel}\, \longrightarrow\, \widetilde{U}\times_U {\mathcal V}\,
\longrightarrow\, \widetilde U
$$
be the relative algebraic tangent bundle for the projection
$\widetilde{U}\times_U {\mathcal V}\, \longrightarrow\, \widetilde U$. Since,
$$
\widetilde{U}\times_U {\mathcal V}
\,=\, {\widetilde U}\times {\mathbb L}^d\, ,
$$
and $H^0({\widetilde U},\, {\mathcal O}_{\widetilde U})\,=\, \mathbb C$
\cite[Lemma 2.2]{BDH}, we have
\begin{equation}\label{tr}
H^0(\widetilde{U}\times_U {\mathcal V},\, T_{\rm rel})\,=\, H^0({\mathbb L},
\, T{\mathbb L})^{\oplus d}\,=\,\text{sp}(2r,{\mathbb C})^{\oplus d}\, .
\end{equation}
Let
$$
T{\mathcal V}\, \supset\, T_{\varphi'}\, \longrightarrow\, {\mathcal V}
$$
be the relative algebraic tangent bundle for the projection $\varphi'$. Since
$f$ in \eqref{f} is a Galois \'etale covering with Galois group $S_d$, from
\eqref{tr} we have
$$
H^0({\mathcal V},\, T_{\varphi'})\,=\,
H^0(\widetilde{U}\times_U {\mathcal V},\, T_{\rm rel})^{S_d}\,=\,
(\text{sp}(2r,{\mathbb C})^{\oplus d})^{S_d}\,=\,
\text{sp}(2r,{\mathbb C})\, .
$$
Combining this with \eqref{dz} we conclude that
$$
H^0({\mathcal V},\, T{\mathcal V})\,=\,
\text{sp}(2r,{\mathbb C})\, .
$$
In particular the homomorphism $d\rho$ in \eqref{dr} is surjective.
\end{proof}

\begin{corollary}\label{cor1}
The isomorphism class of the variety $\mathcal Q$ uniquely determines the
isomorphism class of $X$, except when $d\,=\, 2\,=\,{\rm genus}(X)$.
\end{corollary}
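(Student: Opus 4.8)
Note first that $r$ and $d$ can be recovered from $\mathcal Q$ alone: by Theorem \ref{thm1} the group $\text{Aut}({\mathcal Q})^0\,=\,\text{PSp}(2r,{\mathbb C})$ determines $r$, and then $\dim{\mathcal Q}\,=\,d(r^2+r+2)/2$ (Lemma \ref{lem1}) determines $d$; so it suffices to reconstruct $X$ from $\mathcal Q$ assuming $r$ and $d$ known. The plan is to do this through the Albanese variety of $\mathcal Q$ and the Brill--Noether loci visible through the Albanese morphism, ending with the Torelli theorem. Write $J\,=\,J(X)$, let $\Theta\,\subset\, J$ be a theta divisor, and let $W_k$ denote a translate in $J$ of the Brill--Noether locus $\{L\in\text{Pic}^k(X)\,:\, h^0(L)\,\geq\, 1\}$, with $W_0\,=\,\{0\}$ and $W_k\,=\,\varnothing$ for $k\,<\,0$. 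As in the proof of Theorem \ref{thm1}, the general fibre of the morphism $\varphi$ of \eqref{e2} is $\mathbb L^d$, and the variety $\mathbb L$ of \eqref{lag}, being the homogeneous space $\text{Sp}(2r,{\mathbb C})/P$ for a parabolic subgroup $P$, is rational, so the general fibre of $\varphi$ is rationally connected. Hence every morphism from $\mathcal Q$ to an abelian variety is constant on the fibres of $\varphi$ and factors through $\varphi$, so $\text{Alb}({\mathcal Q})$ is canonically $\text{Alb}(\text{Sym}^d(X))\,=\,J$, and the Albanese morphism of $\mathcal Q$ is the composite $a\,:\,{\mathcal Q}\, \stackrel{\varphi}{\longrightarrow}\, \text{Sym}^d(X)\, \stackrel{u}{\longrightarrow}\, J$, with $u$ an Abel--Jacobi map (defined up to a translation of $J$). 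In particular $g\,=\,\text{genus}(X)\,=\,\dim\text{Alb}({\mathcal Q})$ is recovered, and it remains to recover $X$ from $J$ and $a$.

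Consider first the range $1\,\leq\, d\,\leq\, 2g-3$. If $d\,\leq\, g-1$, then the image of $a$ is the proper subvariety $u(\text{Sym}^d(X))\,=\,W_d\,\subset\, J$ (a translate), which is therefore determined by $\mathcal Q$; by Poincar\'e's formula its cohomology class is $[\Theta]^{g-d}/(g-d)!$, and this determines the class $[\Theta]$, hence $X$ by the classical Torelli theorem. If instead $g\,\leq\, d\,\leq\, 2g-3$ -- a range nonempty only for $g\,\geq\, 3$ -- then $u$ and $a$ are surjective, but, using that $\varphi$ is equidimensional with all fibres of dimension $\dim\mathbb L^d$, the locus $B\,\subset\, J$ over which $a$ has fibres of dimension strictly larger than the generic one is $\{L\in\text{Pic}^d(X)\,:\, h^1(L)\,\geq\, 1\}$, which under $L\,\mapsto\, K_X\otimes L^{-1}$ is identified with $W_{2g-2-d}$, where $1\,\leq\, 2g-2-d\,\leq\, g-2$. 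Thus $\mathcal Q$ determines the pair $(J,\, W_{2g-2-d})$, whence $[\Theta]$, and so $X$, as before.

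It remains to treat the range $d\,\geq\, 2g-2$. This contains the exceptional case $d\,=\,g\,=\,2$, in which $u\,:\,\text{Sym}^2(X)\,\to\, J$ merely contracts the hyperelliptic pencil to the point $[K_X]$, so that $\mathcal Q$ recovers only the abelian surface underlying $J$; since an abelian surface may carry inequivalent principal polarizations coming from non-isomorphic genus-$2$ curves, this case is genuinely excluded. In every other case of this range one has $d\,\geq\, g+1$, and $u$ realizes $\text{Sym}^d(X)$ -- over the complement of the locus where $h^1$ jumps, which is empty if $d\,\geq\, 2g-1$ and a single point if $d\,=\,2g-2$ -- as the projectivization of a Picard bundle $\mathcal E$ on $J$ of rank $d-g+1\,\geq\, 2$. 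The point is then that the isomorphism class of $\mathcal Q$, and in particular its cohomology ring (which records the pullback of the relative $\mathcal O(1)$ of this projectivization together with the Chern classes of $\mathcal E$), determines the class $[\Theta]\,\in\,\text{NS}(J)$ -- equivalently, the Fourier--Mukai transform of $\mathcal E$ with respect to the canonical Poincar\'e bundle on $J\times\widehat J$ (with $\widehat J$ the dual abelian variety) is supported on a translate of the Abel--Jacobi image of $X$ -- so $X$ is reconstructed by Torelli. The three cases together prove the corollary.

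The step I expect to be the main obstacle is the last one, $d\,\geq\, 2g-2$: for $d\,\leq\, g-1$ the Albanese morphism recovers a proper Brill--Noether locus outright, and for $g\,\leq\, d\,\leq\, 2g-3$ its jump locus still supplies one, but once $d\,\geq\, 2g-2$ the Albanese morphism of $\mathcal Q$ is surjective with rationally connected fibres over all of $J$ -- indeed $\text{Sym}^d(X)$ is uniruled for $d\,>\,g$ -- and the theta polarization of $J$ must be extracted from finer invariants of $\mathcal Q$ (its cohomology ring, or the Picard bundle encoded in the fibration $a$). The case $d\,=\,g\,=\,2$ is precisely where both devices fail: the jump locus degenerates to a point and the relevant Picard bundle has rank $1$, so only the unpolarized abelian surface $J$ survives, which is why exactly this case must be excluded.
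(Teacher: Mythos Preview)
The paper's proof is two sentences: it invokes Theorem~\ref{thm1} and Fakhruddin's Torelli theorem for symmetric products \cite{fa}, pointing to \cite[Theorem~5.1]{BDH} for the template. The scheme is to recover $\text{Sym}^d(X)$ from $\mathcal{Q}$ and then apply the known fact that $\text{Sym}^d(X)$ determines $X$ except when $d=2=g$; this handles every $d$ at once.

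Your route is different --- you try to go straight from $\mathcal{Q}$ to the polarized Jacobian through the Albanese map, with a trichotomy on $d$ --- and it has a genuine gap in the third range $d\ge 2g-2$, which you rightly flag as the obstacle. There you only assert that the cohomology ring of $\mathcal{Q}$, or a Fourier--Mukai transform seen through $\mathcal{Q}$, recovers $[\Theta]$; but $H^*(\mathcal{Q})$ mixes $H^*(\text{Sym}^d(X))$ with the cohomology of the (non-constant, possibly singular) fibres of $\varphi$, and you give no mechanism to disentangle them. In effect you are trying to reprove Fakhruddin's theorem while working upstairs on $\mathcal{Q}$ rather than on $\text{Sym}^d(X)$, without having first extracted the latter. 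There are also smaller issues in the other two ranges: for $d\le g-1$ the Poincar\'e formula does not by itself let you solve for $[\Theta]$ from the class $[\Theta]^{g-d}/(g-d)!$ (you must use the subvariety $W_d$, not just its cohomology class), and for $g\le d\le 2g-3$ you assume that $\varphi$ is equidimensional, which is plausible but is not established in the paper. The paper's appeal to \cite{fa} bypasses all of this.
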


\begin{proof}
This follows from Theorem \ref{thm1} and \cite{fa}. The argument is similar
to the proof of Theorem 5.1 in \cite{BDH}.
\end{proof}

\end{document}